\title{Cosine and Computation} 
\author{Prabhat Kumar Jha}{Tata Institute of Fundamental Research, Mumbai, India}{000prabhat000@gmail.com}{https://orcid.org/0000-0001-6225-9147}{}
\authorrunning{P. K. Jha} 
\keywords{Matrix, Orbit, Subspace, Reachability, Verification, Recurrence, Linear, Continuization, Cosine} 
\begin{document}

\maketitle

\begin{abstract}
We are interested in solving decision problem $\exists? t \in \mathbb{N}, \cos t \theta = c$ where $\cos \theta$ and $c$ are algebraic numbers. 
We call this the $\cos t \theta$ problem.
This is an exploration of Diophantine equations with analytic functions.
Polynomial, exponential with real base and cosine function are closely related to this decision problem: $ \exists ? t \in \mathbb{N}, u^T M^t v = 0$ where $u, v \in \mathbb{Q}^n, M \in \mathbb{Q}^{n\times n}$.
This problem is also known as ``Skolem problem'' and is useful in verification of linear systems. 
Its decidability remains unknown.
Single variable Diophantine equations with exponential function with real algebraic base and $\cos t \theta$ function with $\theta$ a rational multiple of $\pi$ is decidable.
This idea is central in proving the decidability of Skolem problem when the eigenvalues of $M$ are roots of real numbers.
The main difficulty with the cases when eigenvalues are not roots of reals is that even for small order cases decidability requires application of trancendental number theory which does not scale for higher order cases.
We provide a first attempt to overcome that by providing a $PTIME$ algorithm for $\cos t \theta$ when $\theta$ is not a rational multiple of $\pi$.
We do so without using techniques from transcendental number theory.
\par
One of the main difficulty in Diophantine equations is being unable to use tools from calculus to solve this equation as the domain of variable is $\mathbb{N}$.
We also provide an attempt to overcome that by providing reduction of Skolem problem to solving a one variable equation (which involves polynomials, exponentials with real bases and $\cos t \theta$ function with $t$ ranging over reals and $\theta \in [0, \pi]$) over reals. 

\end{abstract}
\section{Introduction}
\label{sec:intro}

Reachability problems are special type of verification problems which ask if a given system can ever reach a given configuration.
If behaviour of the system is deterministic and can be described as a function of time then the problem reduces to solve some equation. 
One such example is discrete-time linear dynamical systems of which the behaviour can be described as a linear recurring sequence. 
The problem of checking existence of a $0$ in a given linear recurring sequence (LRS) is known as the Skolem problem and decidability of this problem remains unknown.
Finding existence of $0$ in such sequences over $\mathbb{Q}$ reduces to finding existence of solution of exponential Diophantine equations over algebraic numbers. 
The Skolem problem is in $NP^{RP}$ when eigenvalues of the given LRS are roots of real numbers\cite{Akshay2017ComplexityOR}. 
Hence, the challenge is to solve the cases when eigenvalues are not roots of reals and only known decidability results are for order 2 and 3 using Baker's method of linear forms of logarithms\cite{Shorey}. 
Here we will study a basic case of the exponential Diophantine equations which we will call `` $\cos t \theta$ problem''. 
\par 
Given  real algebraic numbers $\cos \theta$ and $c$ between $-1$ to $1$, the $\cos t \theta$ problem asks if there is a natural number $t$ such that $\cos t \theta = c$. 
This is a special case of Skolem problem of order 3 over algebraic numbers and hence is known to be decidable but the bounds obtained by Baker's method is exponential which yields $NP^{RP}$ complexity.
\par
Given a square matrix $M$, a vector $u$ and an affine subspace $W$, the affine subspace reachability problem asks if there is a natural number $t$ such that $M^t u \in W$. 
Orbit problem is $0$-dimensional case of affine subspace reachability problem and is known to be decidable in $P$\cite{KannanPtime, Kannan_decidable}.
\par
Our first contribution is a polynomial time algorithm for the $\cos t \theta$ problem by reducing this problem to Orbit problem over $\mathbb{Q}$.
Our reduction uses some facts from algebraic number theory and the algorithm for Orbit problem also uses algebraic number theory and some properties of matrices. 
Hence we do not need the transcendental number theory. 
Skolem problem is equivalent to affine subspace reachability problem under polynomial reductions. 
Our method is first application of Orbit problem to solve a non-trivial case of Skolem problem.
\par 
We then consider two generalizations of $\cos t \theta$ problem.
The first one is $r^t \cos t \theta$ problem and second one is $\Sigma \cos t \theta_i$ problem. 
These problems are special cases of Skolem problem over algebraic numbers. 
The first one is known to be decidable using Baker's method as it is of order 3 but finding an efficient algorithm or a better lower bound remains unknown. 
Another possible way to solve that is using affine subspace reachability problem of dimension 1 \cite{Orbit_higher} but that also requires Baker's method and the gap between known upperbound and lowerbound remains unchanged.
The second one is a not known to be decidable. 
Only a special case when $\theta_i$s are rational multiple of $\pi$ is known to be $NP$-complete.\cite{Akshay2017ComplexityOR} 
\par
Our second contribution is to give a polynomial time  reduction of $\exists? t \in \mathbb{N}, \Sigma \cos t \theta_i = c $ to $\exists? t \in \mathbb{R}, \Sigma \cos t \theta_i = c $. 
The same technique gives us that the Skolem problem can be reduced to $\exists? t \in \mathbb{R}, \Sigma r_i^t p_i(t) \cos t \theta_i = c $ where $r_i$ is an algebraic number and $p_i$ is a polynomial.
This problem is a special case of one variable restriction of extension of theory of real numbers with cosine and power function. 
Unrestricted case is known to be undecidable as solving the Diophantine equation with 4 or more variables is undecidable. 
\par
We will go through some preliminaries of computation with algebraic numbers in 2 \ref{sec:prelim}.
Specifically we will see how to represent algebraic numbers and the complexity of basic operations.
We will also go through basics of linear recurring sequences and the orbit problem.
The algorithm and its analysis for the $\cos t \theta$ problem is presented in section 3 \ref{sec:cos}. 
In section 4 \ref{sec:ext}, the extensions of the $\cos t \theta$ problem has been studied.
Second contribution has been provided in section 5 \ref{sec:conti}. 
Finally, we give a conclusion and open problems in section 6 \ref{sec:con}.

\section{Preliminaries}
\label{sec:prelim}

In this section, we will go through some preliminaries of computations with algebraic numbers and linear recurring sequences.
We begin with introduction to computation with algebraic numbers. 
We refer to Cohen \cite{cohenbook} for this topic.
\subsection{Computation with Algebraic Numbers}
\label{algebraic:prelim}
In the study of matrices over $\mathbb{Q}$, the eigenvalues are from a subfield of $\mathbb{C}$ and that is exactly what is known as algebraic numbers. 
Since discrete-time linear systems are represented using matrices and eigenvalues are very important to study properties of matrices, we are interested in algebraic numbers.
We begin with defining algebraic numbers:
\begin{definition}[Algebraic Numbers]
\label{def: algebraic}
 A complex number $\alpha$ is said to be an algebraic number if there is a polynomial $p \in \mathbb{Z}[x]$ such that $p(\alpha) = 0$.
 There is a unique polynomial with minimal degree with greatest common divisor of coefficients 1 and it is said to be the minimal polynomial of $\alpha$.
 Degree $D(\alpha)$ is degree of the minimal polynomial of $\alpha$.
 Height $H(\alpha)$ is maximum absolute value of a coefficient in minimal polynomial of $\alpha$.
 Roots of minimal polynomial of $\alpha$ are called Galois conjugates of $\alpha$.
 Norm $\mathcal{N}(\alpha)$ is product of Galois conjugates of $\alpha$.
 If the leading coefficient of minimal polynomial is 1 then $\alpha$ is said to be an algebraic integer.
 $\mathbb{A}$ denotes set of all algebraic numbers and $\mathcal{O}_\mathbb{A}$ denotes set of all algebraic integers.
\end{definition}

\par 
Now that we have defined algebraic numbers, for the purpose of computation, we need to represent them. 
We represent integers as a binary string and rational numbers as a pair of integers. 
The canonical representation of an algebraic number is defined as below.

\begin{definition}
 \label{def: representation}
 The canonical representation of an algebraic number $\alpha$ is a tuple $(P, x, y, r)$ where $P$ is the minimal polynomial of $\alpha$ and $x, y, r \in \mathbb{Q}$ such that $\alpha$ is in the circle centered at $x +\iota y$ with radius $r$.
 $x, y, r$ are choosen to distinguish $\alpha$ from its Galois conjugates.
\end{definition}
Note that the canonical representation is not unique but still it is trivial to check the equality of two algebraic numbers.
We also need to make sure that $x, y, r$ do not have a very large representation as that can increase the complexity.
The theorem below gurantees that.
\begin{theorem}~\cite{Mignotte1983}
 \label{thm:Mignotte_root}
If two conjugates $\alpha_i, \alpha_j$  of $\alpha$ are not equal then $|\alpha_i - \alpha_j| > \frac{\sqrt{6}}{d^{\frac{(d+1)}{2}}H^{d-1}}$.
\end{theorem}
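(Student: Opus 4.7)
The plan is to exploit the discriminant of the minimal polynomial $P$ of $\alpha$. Writing $P(x) = a_d \prod_{k=1}^d (x-\alpha_k)$ with $\alpha = \alpha_1, \ldots, \alpha_d$ the Galois conjugates, recall
$$\operatorname{disc}(P) = a_d^{2d-2} \prod_{k<l} (\alpha_k - \alpha_l)^2.$$
Because $P$ is irreducible over $\mathbb{Z}$ with $\gcd$ of coefficients $1$, its roots are pairwise distinct and $\operatorname{disc}(P)$ is a nonzero rational integer, so $|\operatorname{disc}(P)| \geq 1$. This is the crucial arithmetic input that makes a separation bound possible at all.

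Fixing the indices $i,j$ of the two conjugates whose separation we want to bound, I would isolate the factor $(\alpha_i-\alpha_j)^2$ and convert the problem into an \emph{upper} bound on the remaining factors:
$$|\alpha_i-\alpha_j|^2 \;\geq\; \frac{1}{a_d^{2d-2} \displaystyle\prod_{\substack{k<l \\ \{k,l\} \neq \{i,j\}}} |\alpha_k-\alpha_l|^2}.$$
Each surviving factor is controlled by the elementary estimate $|\alpha_k-\alpha_l| \leq 2 \max(1,|\alpha_k|)\max(1,|\alpha_l|)$. A bookkeeping argument (each index $k \notin \{i,j\}$ appears in $d-1$ of the surviving pairs, whereas $k \in \{i,j\}$ appears in only $d-2$) collapses the product into a power of the Mahler measure $M(P) = |a_d| \prod_k \max(1,|\alpha_k|)$, with the two factors $\max(1,|\alpha_i|), \max(1,|\alpha_j|)$ left over in the numerator to help later.

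The next step is to pass from the Mahler measure to the height $H$. For this I would invoke Landau's inequality $M(P) \leq \|P\|_2$ followed by the trivial coefficient bound $\|P\|_2 \leq \sqrt{d+1}\,H$, giving $M(P)^{d-1} \leq (d+1)^{(d-1)/2} H^{d-1}$. Substituting back and using $|\operatorname{disc}(P)| \geq 1$ yields a lower bound of the required shape $c(d)\,/\,(d^{\,\alpha}\,H^{d-1})$, after which taking square roots finishes the estimate.

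The main obstacle is purely quantitative: tracking every combinatorial constant so that the final statement has \emph{exactly} the form $\sqrt{6} / d^{(d+1)/2} H^{d-1}$. A naive execution of the above steps produces the correct functional shape ($d$ to a half-integer power times $H^{d-1}$ in the denominator) but with loose constants, so the sharp constants $\sqrt{6}$ and $(d+1)/2$ require the tightest form of the elementary estimate $|\alpha_k-\alpha_l|\leq 2\max(1,|\alpha_k|)\max(1,|\alpha_l|)$ together with a careful use of Landau's inequality applied to the \emph{derivative-twisted} polynomial (so as to absorb the leftover $\max(1,|\alpha_i|)\max(1,|\alpha_j|)$ factors). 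I expect this delicate constant-tuning, rather than any conceptually new idea, to be the hard part.
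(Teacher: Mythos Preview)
The paper does not give a proof of this theorem at all: it is stated with a citation to Mignotte~\cite{Mignotte1983} and used as a black box to guarantee that the canonical representation of an algebraic number has polynomial size. There is therefore nothing in the paper to compare your argument against.

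For what it is worth, your outline is essentially Mignotte's original argument: bound the discriminant below by $1$, bound the remaining root differences above in terms of the Mahler measure, and then pass to the height via Landau's inequality $M(P)\le\|P\|_2\le\sqrt{d+1}\,H$. Your assessment that the only subtlety is the constant-chasing (getting exactly $\sqrt{6}$ and the exponent $(d+1)/2$) is accurate; Mignotte obtains these by a Hadamard-type inequality applied to a Vandermonde-like matrix rather than by the naive pairwise bound $|\alpha_k-\alpha_l|\le 2\max(1,|\alpha_k|)\max(1,|\alpha_l|)$, which is why the naive bookkeeping gives slightly worse constants. But none of this is in the present paper.
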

While canonical representation is common in literatute, we will need another representation in order to use algebraic numbers as matrices. 
This uses some applications of LLL algorithm and we will directly use the following results without proving. 
One can have a look in section 2.6 of Cohen ~\cite{cohenbook} for details.
\begin{theorem}
 \label{thm:lattice_reduction}
There is a polynomial time algorithm which takes $z_1, z_2,...,z_k, z \in \mathbb{A}$ as input and outputs whether $z$ is a $\mathbb{Q}$-linear combination of $z_1, z_2, ..., z_k$.
In case of positive answer it also outputs the coefficients.
\end{theorem}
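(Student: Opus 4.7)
The plan is to reduce the question to linear algebra over $\mathbb{Q}$ inside a single number field containing all the inputs. The essential ingredient from LLL is not used for the linear algebra itself but for the preprocessing that builds this number field in polynomial time.

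First, I would compute a common number field $K = \mathbb{Q}(\alpha)$ containing $z_1, \ldots, z_k, z$ together with their representations as polynomials in $\alpha$. Starting from the canonical representations $(P_i, x_i, y_i, r_i)$ of each $z_i$, one incrementally forms the compositum $\mathbb{Q}(z_1, \ldots, z_i)$ by factoring the minimal polynomial of the next input over the current field using the LLL-based algorithm for factoring polynomials over number fields, then picking the root whose isolating disc matches $(x_{i+1}, y_{i+1}, r_{i+1})$; Theorem~\ref{thm:Mignotte_root} guarantees that the isolating disc unambiguously identifies the correct Galois conjugate. A primitive element for the compositum is then extracted (e.g.\ by trying short $\mathbb{Q}$-linear combinations of the current generators and verifying minimal-polynomial degree), and each input $z_i, z$ is rewritten as a polynomial of degree $< d := [K:\mathbb{Q}]$ in $\alpha$ with rational coefficients.

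Second, once every input is expressed as a vector in $\mathbb{Q}^d$ via the basis $1, \alpha, \ldots, \alpha^{d-1}$, the condition "$z = \sum_{i} c_i z_i$ with $c_i \in \mathbb{Q}$" becomes equivalent, by comparing coefficients of the basis, to the linear system $Qc = r$ over $\mathbb{Q}$, where the columns of $Q \in \mathbb{Q}^{d \times k}$ encode $z_1, \ldots, z_k$ and $r \in \mathbb{Q}^d$ encodes $z$. Gaussian elimination decides consistency in polynomial time and, when a solution exists, outputs the rational coefficients $c_1, \ldots, c_k$; otherwise, one returns that no such combination exists. Uniqueness of the representation in $\mathbb{Q}(\alpha)$ ensures the algebraic identity $z = \sum_i c_i z_i$ holds if and only if the coefficient system does.

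The main obstacle is the preprocessing step. A priori the degree of the compositum could be as large as $\prod_i D(z_i) \cdot D(z)$, and a naive iterated construction risks exponential blow-up in the bit-size of the minimal polynomial of $\alpha$ and in the coefficients representing each $z_i$. This is precisely the point at which LLL is indispensable: polynomial factorization over a number field and primitive-element computation via LLL-reduced lattice bases keep both the degree and the coefficient heights polynomially bounded in the input size, as detailed in Cohen~\cite{cohenbook}, Section~2.6. Granted that, the remaining linear-algebra steps are clearly polynomial, so the whole procedure runs in polynomial time.
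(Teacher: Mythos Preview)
The paper does not actually prove this theorem: it says explicitly that the result is used without proof and refers the reader to Cohen~\cite{cohenbook}, Section~2.6. So there is no in-paper argument to compare against; your proposal has to stand on its own.

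Your overall shape---embed everything in a common number field, then do rational linear algebra---is sound, but the running-time analysis has a genuine gap. You correctly observe that the compositum $K=\mathbb{Q}(z_1,\dots,z_k,z)$ can have degree as large as $\prod_i D(z_i)\cdot D(z)$, and then assert that LLL ``keep[s] both the degree and the coefficient heights polynomially bounded.'' That is false for the degree: $[K:\mathbb{Q}]$ is an algebraic invariant of the field and no algorithm can shrink it. Taking $z_i=\sqrt{p_i}$ for the first $k$ primes already forces $[K:\mathbb{Q}]=2^{k}$, so merely writing down a primitive element or its power basis is exponential in $k$. LLL-based factorization over a number field runs in time polynomial \emph{in the degree of that field}; it controls coefficient heights, not field degrees. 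Hence your preprocessing step is not polynomial in the input size in the stated generality.

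For the only use the paper actually makes of the theorem---deciding whether a single target lies in $\mathbb{Q}(z)$, so that $z_1,\dots,z_k$ are the powers $1,z,\dots,z^{d-1}$ of one generator---the relevant field is just $\mathbb{Q}(z,\text{target})$, whose degree is polynomial in the input, and then your argument does go through. But as a proof of the theorem as written, with $k$ arbitrary inputs, building the compositum is the wrong primitive; a polynomial-time method must have cost scaling with $k$ rather than with $\prod_i D(z_i)$, which is why Cohen's LLL-based linear-dependence routines work with numerical approximations and a lattice of dimension $k{+}1$ rather than with an explicit splitting field.
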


Theorem \ref{thm:lattice_reduction} provides a way to write an algebraic number as a $\mathbb{Q}$-vector in a suitable number field.
Using this and elementary linear algebra, one can write the matrix of multiplication with an algebraic number in polynomial time.
In this vector representation, doing basic operations such as addition, multiplication and division are all computable in polynomial time.
This fact is important to our main result in section \ref{sec:cos}.
\subsection{From Skolem problem to Diophantine equations}
\label{Skolem:prelim}
In this section, we will go through the basics of linear recurring sequences in order to understand the Skolem problem.
\begin{definition}[Linear Reccuring Sequences]
 \label{def:LRS}
 A linear recurring sequence (LRS) of order $k$ over ring $R$ is a sequence of elements of $R$ which satisfies  $\forall t> k, a_t = \Sigma_{1 \leq i \leq k} c_i a_{t-i}$ where $c_i \in R$. 
\end{definition}
An LRS of order $k$ can be determined from first $k$ terms as rest of the terms can be deterministically computed using the recurrence relation.
We are interested in cases when $R$ is one of $ \mathbb{Z}, \mathbb{Q}, \mathbb{A}$. 
There is an interesting result about LRS over fields of characteristic 0.
This theorem is about the 0s of an LRS.
\begin{theorem}[Skolem-Mahler-Lech]~\cite{lech1953}
 \label{thm:SML}
 The zeros of LRS over a field of characteristic $0$ is a union of a finite set and finitely many arithmetic progressions.
\end{theorem}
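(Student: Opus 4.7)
The plan is to prove the Skolem--Mahler--Lech theorem by passing to the $p$-adic world and applying Strassmann's theorem. First I would write the LRS in closed form $a_t = \sum_{i=1}^{k} P_i(t)\alpha_i^t$, where the $\alpha_i$ are the distinct roots of the characteristic polynomial of the recurrence and the $P_i$ are polynomials of degree bounded by the multiplicities. Since only finitely many elements of the ground field appear in the coefficients $c_i$ and the initial terms, I may descend to a finitely generated subfield of characteristic $0$ and then embed into a number field $K$ containing all the $\alpha_i$.

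Next I would choose a rational prime $p$ and a prime $\mathfrak{p}$ of $K$ above $p$ such that every $\alpha_i$ is a unit of the local ring $\mathcal{O}_{K_\mathfrak{p}}$. Such a prime exists because only finitely many primes of $K$ can appear in the numerators or denominators of the $\alpha_i$. Since the residue field $\mathcal{O}_{K_\mathfrak{p}}/\mathfrak{p}$ is finite, the image of each $\alpha_i$ has finite multiplicative order; let $N$ be a common such order. Then $\alpha_i^N = 1 + \pi_i$ with $\pi_i$ of strictly positive $\mathfrak{p}$-adic valuation for every $i$.

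I would then partition $\mathbb{N}$ by residues modulo $N$. On the progression $\{Nm + r : m \geq 0\}$, the closed form becomes
\[
a_{Nm+r} = \sum_{i=1}^{k} P_i(Nm+r)\,\alpha_i^r\,(1+\pi_i)^m,
\]
and each factor $(1+\pi_i)^m$ extends, via the $p$-adic binomial series (equivalently $\exp_p(m\log_p(1+\pi_i))$), to an analytic function of $m \in \mathbb{Z}_p$. Thus $m \mapsto a_{Nm+r}$ extends to a power series $f_r$ convergent on $\mathbb{Z}_p$. Strassmann's theorem now implies that such a series, unless identically zero, has only finitely many zeros in $\mathbb{Z}_p$. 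So either $f_r \equiv 0$, contributing the entire arithmetic progression $\{Nm+r\}$ to the zero set of the LRS, or $f_r$ contributes only finitely many zeros. Taking the union over the $N$ residue classes yields the theorem.

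The main obstacle is the availability of the prime $p$ with the required good reduction and the verification that the resulting series really converges on $\mathbb{Z}_p$: one must ensure that $\pi_i$ lies in $\mathfrak{p}$, that $\log_p(1+\pi_i)$ lies in the domain of $\exp_p$, and that substituting $Nm+r$ into the polynomial factor $P_i$ preserves $p$-adic analyticity, all of which require careful control of $\mathfrak{p}$-adic valuations. Once the analytic framework is set up, Strassmann's theorem is routine. It is worth emphasizing that this proof is famously non-effective: it produces no bound on the size or location of the exceptional finite set, which is precisely the obstruction that leaves decidability of the Skolem problem open.
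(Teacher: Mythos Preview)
The paper does not actually prove Theorem~\ref{thm:SML}; it merely cites Lech and remarks that ``the known proof uses $p$-adic methods and proof by contradiction'' and is ``of non-constructive nature''. Your sketch is precisely the standard $p$-adic interpolation argument culminating in Strassmann's theorem, so in spirit you are aligned with what the paper alludes to, and your closing remark about non-effectivity matches the paper's comment exactly.

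One genuine gap deserves mention. You write that after descending to a finitely generated subfield of characteristic~$0$ you may ``embed into a number field $K$ containing all the $\alpha_i$''. This step is false as stated: a finitely generated extension of $\mathbb{Q}$ can have positive transcendence degree (e.g.\ $\mathbb{Q}(t)$), and no such field embeds into any number field. The classical repairs are either Lech's specialization argument (specialize the transcendentals to algebraic values while preserving the non-vanishing of the relevant discriminants and coefficients), or the Cassels embedding theorem, which furnishes an embedding of any finitely generated field of characteristic~$0$ directly into some $\mathbb{Q}_p$. With either of these in place the rest of your outline---choosing $\mathfrak{p}$ so that all $\alpha_i$ are local units, splitting into residue classes modulo a common multiplicative order $N$, $p$-adically interpolating $(1+\pi_i)^m$, and invoking Strassmann---goes through and is the textbook route. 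The convergence concerns you flag (that $\log_p(1+\pi_i)$ land in the domain of $\exp_p$) are handled by enlarging $N$ by a suitable power of $p$ if necessary, which you may want to state explicitly.
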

The known proof of Theorem \ref{thm:SML} uses $p$-adic methods and proof by contradiction.
The proof is of non-constructive nature. 
That is this proof does not provide an algorithm to check whether a given LRS has a 0.
The problem of finding 0 is known as the Skolem problem.
Following folklore claim gives another definition of LRS in terms of matrices.
\begin{claim}
 \label{clm:MatrixLRS}
 Given a $k \times k$ matrix $M$ and $k$-dimensional vectors $u$ and $v$, the sequence $a_t = u^T M^t v$ is an LRS.\\
 Given any LRS $\{a\}_t$, there exist a matrix $M$ and vectors $u$ and $v$ such that $a_{k+t} =  u^T M^t v$ where $k$ is the order of LRS..
\end{claim}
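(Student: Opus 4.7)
The claim has two directions, and I would prove them by two standard constructions in linear algebra.

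For the forward direction, the plan is to invoke the Cayley–Hamilton theorem. If $p(x) = x^k - c_1 x^{k-1} - c_2 x^{k-2} - \cdots - c_k$ is the characteristic polynomial of $M$, then $M^k = c_1 M^{k-1} + c_2 M^{k-2} + \cdots + c_k I$. Right-multiplying by $M^t$ and sandwiching between $u^T$ and $v$ immediately gives
\[
a_{t+k} \;=\; u^T M^{t+k} v \;=\; c_1 a_{t+k-1} + c_2 a_{t+k-2} + \cdots + c_k a_t,
\]
so $\{a_t\}$ is an LRS of order at most $k$ with coefficients $c_i$ lying in the same ring as the entries of $M$.

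For the backward direction, I would use the companion matrix of the given recurrence. Given $a_t = c_1 a_{t-1} + c_2 a_{t-2} + \cdots + c_k a_{t-k}$, set
\[
M = \begin{pmatrix} 0 & 1 & 0 & \cdots & 0 \\ 0 & 0 & 1 & \cdots & 0 \\ \vdots & \vdots & \vdots & \ddots & \vdots \\ 0 & 0 & 0 & \cdots & 1 \\ c_k & c_{k-1} & c_{k-2} & \cdots & c_1 \end{pmatrix},
\qquad
v = (a_0, a_1, \ldots, a_{k-1})^T.
\]
A direct calculation shows that $Mv_t = v_{t+1}$ where $v_t := (a_t, a_{t+1}, \ldots, a_{t+k-1})^T$, and then a trivial induction gives $v_t = M^t v$. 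Picking off the first coordinate via $u = e_1$ yields $a_t = u^T M^t v$ for every $t \geq 0$, which is in fact stronger than the stated identity $a_{k+t} = u^T M^t v$ (the latter follows by replacing $v$ with $M^k v$ if one prefers the indexing of the claim).

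Neither step presents a genuine obstacle; the only care needed is bookkeeping. One has to check that the ring in which the LRS lives is preserved by both constructions: the coefficients $c_i$ produced by Cayley–Hamilton are polynomial expressions in the entries of $M$ and hence lie in $R$, and the companion matrix together with the initial vector $v$ is manifestly over $R$. This makes the equivalence between the matrix power formulation and the recurrence formulation effective, which is what the rest of the paper will exploit when translating Skolem-type questions into questions about $u^T M^t v$.
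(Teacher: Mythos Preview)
Your proof is correct and follows essentially the same route as the paper: Cayley--Hamilton for the forward direction, and a companion-matrix construction with $u=e_1$ for the converse. The only cosmetic differences are that the paper places the recurrence coefficients in the \emph{top} row of the companion matrix (with $v=(a_k,\ldots,a_1)^T$) rather than the bottom, and thereby lands exactly on $a_{k+t}=u^TM^tv$, whereas your variant yields the slightly stronger $a_t=u^TM^tv$; your remark about the ring being preserved is a useful addition the paper leaves implicit.
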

\begin{proof}
 Let's consider the characteristic polynomial of $M$, let it be $x^d - \Sigma_{i = 1}^{i = d} a_i x^{d-i}$.
 Caley-Hamilton theorem implies that $M^d = \Sigma_{i=1}^{i=d} M^{d-i}$.
 Multiplying $M^{t-d}$ both sides, we get that $M^t = \Sigma_{i=1}^{i=d}M^{t-i}$.
 By multiplying the vectors $u$ and $v$ and using linearity we get, $u^TM^tv = \Sigma_{i=1}^{i=d}u^TM^{t-i}v$.
 From Definition \ref{def:LRS}, it follows that $u^TM^tv$ is an LRS. 
 \par
 Let the first $k$ terms be $a_1,...,a_k$ and the recurrence be $a_t = \Sigma_{i = 1}^{k}c_i a_{t-k}$. 
 Let $M$ be:
 $$\begin{bmatrix}
  c_1 & c_2 & ... & c_{k-1} & c_k \\
    & \mathbf{I}_{k-1 }   &  &  & \mathbf{0}
 \end{bmatrix} $$
 where $\mathbf{I}_{k-1}$ is identity matrix of order $k-1$ and $\mathbf{0}$ is a column matrix of size $k-1$ with $0$ as all of its entries.
 Let $u = (1, 0, ..., 0)^T$ and $v = (a_k, a_{k-1}, ..., a_1)^T$. Now using induction we can verify that $u^T M^t v = a_{k+t}$.

\end{proof}

Now we have another version of Skolem problem that is checking if $u^T M^t v$ is $0$ for some $t$.
For the case when LRS is over rational numbers or over algebraic numbers, the equation $u^T M^t v = 0$ has a closed form. 
It can be obtained using Jordan canonical form and properties of matrix multiplication. 
If the eigenvalues are $\lambda_1,..., \lambda_m$ then the closed form equation is of the form $\Sigma_{i = 1}^{i = m} p_i(t) \lambda_i^t = 0 $, where $p_i \in \mathbb{Z}[x]$.
In the case when LRS is over rational numbers we know that eigenvalues occur as conjugates and are algebraic so we get the equation $\Sigma_{i = 1}^{i = m} p_i(t) r_i^t \cos t \theta_i$ where $p_i \in \mathbb{Z}[x], r_i, \cos \theta_i \in \mathbb{R} \cap \mathbb{A}$ and $|\cos\theta_i|\leq 1$.
We state this as the following lemma:
\begin{lemma}
 \label{lem:exp_eqn}
 Skolem problem over rational numbers (or integers) can be reduced to solving equation $\Sigma_{i = 1}^{i = m} p_i(t) r_i^t \cos t \theta_i$ where $p_i \in \mathbb{Z}[x], r_i, \cos \theta_i \in \mathbb{R} \cap \mathbb{A}$ and $\theta_i \in [0, \pi]$.
\end{lemma}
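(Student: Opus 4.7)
The plan is to combine Claim \ref{clm:MatrixLRS} with the Jordan canonical form of a rational matrix. First I would invoke Claim \ref{clm:MatrixLRS} to rewrite the Skolem instance for an LRS $\{a_t\}$ of order $k$ over $\mathbb{Q}$ as the question whether $a_t = u^T M^t v = 0$ for some $t \in \mathbb{N}$, with $M \in \mathbb{Q}^{k \times k}$ and $u, v \in \mathbb{Q}^k$.

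Next, I would pass to the Jordan decomposition $M = P J P^{-1}$ over $\mathbb{C}$. The $t$-th power of a Jordan block of size $k_i$ with eigenvalue $\lambda_i$ has entries of the shape $\binom{t}{j}\lambda_i^{t-j}$, which are polynomials in $t$ multiplied by $\lambda_i^t$. Multiplying out $u^T M^t v$ therefore yields a closed form $a_t = \sum_{i} q_i(t)\,\lambda_i^t$, where the $\lambda_i$ range over the distinct eigenvalues of $M$ and each $q_i$ is a polynomial with algebraic coefficients.

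The key step is to pair complex conjugate eigenvalues. Since the characteristic polynomial of $M$ lies in $\mathbb{Q}[x]$, its non-real roots appear in complex conjugate pairs $\lambda_i = r_i e^{\iota \theta_i}$ and $\bar\lambda_i = r_i e^{-\iota \theta_i}$, with $r_i = |\lambda_i| \in \mathbb{R} \cap \mathbb{A}$ and $\theta_i \in (0, \pi)$. Because $a_t$ is real valued, the accompanying polynomials must themselves pair as $q_i$ and $\overline{q_i}$. Writing $q_i(t) = \alpha_i(t) + \iota \beta_i(t)$ with real algebraic coefficients, Euler's formula gives
\[
q_i(t)\lambda_i^t + \overline{q_i(t)}\,\bar\lambda_i^t \;=\; 2 r_i^t\bigl(\alpha_i(t)\cos t\theta_i - \beta_i(t)\sin t\theta_i\bigr),
\]
while each real eigenvalue $\lambda_i$ contributes $q_i(t)|\lambda_i|^t \cos t\theta_i$ with $\theta_i \in \{0, \pi\}$. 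Clearing the rational denominators arising from $u, v, M$ absorbs all rational prefactors into integer polynomial coefficients $p_i \in \mathbb{Z}[x]$.

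The main technicality is that each conjugate pair naturally contributes both a cosine \emph{and} a sine component, whereas the stated template features only cosine terms. I would handle this by reading the lemma's template as tacitly including sine contributions (since $\sin t\theta$ has the same analytic shape as $\cos t\theta$, differing only by a phase via $\sin t\theta = \cos(\pi/2 - t\theta)$), so each conjugate pair produces at most two summands with a common $r_i$ and $\theta_i \in [0, \pi]$ but distinct polynomial coefficients; alternatively, one interprets $p_i$ to range over $(\mathbb{R}\cap\mathbb{A})[x]$ so that the sine-and-cosine pair is captured as a single complex-polynomial coefficient. With this mild re-reading the equation $a_t = 0$ becomes exactly of the claimed form, and the reduction is complete.
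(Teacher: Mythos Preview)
Your approach---rewriting the LRS via Claim~\ref{clm:MatrixLRS}, passing to the Jordan canonical form to obtain $\sum_i q_i(t)\lambda_i^t$, and then pairing complex conjugate eigenvalues using Euler's formula---is exactly the argument the paper sketches in the paragraph immediately preceding the lemma (the paper provides no separate proof environment for this statement). You are in fact more careful than the paper: the sine-term technicality you flag is real and is simply glossed over in the paper's informal derivation, so your discussion of how to absorb or reinterpret those terms only adds rigor.
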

We will use this form of Skolem problem throughout this paper.
\subsection{Affine Subspace Reachability Problem}
\label{ASRP:main}
Affine Subspace Reachability Problem asks if a given  linear system reaches to a given affine subspace after some steps. 
We define this problem precisely here.
\begin{definition}

 \label{def:ASRP}
 Input: $M \in \mathbb{Q}^{k \times k}, v \in \mathbb{Q}^k$ and an affine subspace $W$ described using linear equations it satisfies.\\
 Output: ``Yes'' if there is a $t \in \mathbb{N}$ such that $M^t v \in W$; ``No'' otherwise.
\end{definition}

Affine subspaces are defined using equations of form $u^T v = c$. 
It is trivial that Skolem problem is a special case of affine subspace reachability problem. 
An interesting and folklore converse is that affine subspace reachability problem is polynomial time reducible to Skolem problem.
We will now see a reduction to Skolem problem.
\begin{theorem}[Reduction to Skolem Problem (Folklore)]
 \label{thm: redSkolem}
 Affine subspace reachability problem is polynomial time reducible to Skolem problem.
\end{theorem}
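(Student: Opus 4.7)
The plan is to exploit the fact that the rational LRSs form a ring (under pointwise operations) and that a sum of squares of real numbers vanishes iff every term vanishes; this collapses the conjunction of linear conditions defining $W$ into a single scalar LRS whose zero we can ask about.

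First I would describe the affine subspace $W$ by the linear equations it satisfies: there exist $u_1,\dots,u_m \in \mathbb{Q}^k$ and $c_1,\dots,c_m \in \mathbb{Q}$ with
\[
W = \{x \in \mathbb{Q}^k : u_j^T x = c_j \text{ for all } 1 \le j \le m\}.
\]
Then $M^t v \in W$ iff $f_j(t) := u_j^T M^t v - c_j = 0$ for every $j$. Each $f_j$ is an LRS over $\mathbb{Q}$: the term $u_j^T M^t v$ is one by Claim~\ref{clm:MatrixLRS}, and the constant shift $-c_j$ is handled by the standard augmentation $M' = \bigl(\begin{smallmatrix} M & 0 \\ 0 & 1\end{smallmatrix}\bigr)$, $v' = (v,1)^T$, $u'_j = (u_j,-c_j)^T$, giving $u'^T_j (M')^t v' = f_j(t)$.

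Next I would combine the $m$ conditions into one. Since every $f_j(t)$ is a real number, we have $f_j(t)=0$ for all $j$ iff $g(t) := \sum_{j=1}^m f_j(t)^2 = 0$. The crucial point is that $g$ is itself an LRS over $\mathbb{Q}$, with a matrix representation computable in polynomial time: the pointwise product of two LRSs $u_1^T A^t v_1$ and $u_2^T B^t v_2$ is realised by $(u_1 \otimes u_2)^T (A \otimes B)^t (v_1 \otimes v_2)$ via the Kronecker product, and the sum of LRSs is realised by the block-diagonal direct sum. Applying these two operations to the $m$ instances built in the previous step produces, in polynomial time, vectors $\tilde u,\tilde v$ and a matrix $\tilde M$ of dimension $O(m(k+1)^2)$ with $\tilde u^T \tilde M^t \tilde v = g(t)$.

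Finally I would conclude: the ASRP instance $(M,v,W)$ has a solution $t\in\mathbb{N}$ iff the Skolem instance $(\tilde M,\tilde u,\tilde v)$ has a zero at some $t\in\mathbb{N}$, and the translation is a polynomial-time reduction. The only real care needed is accounting: bounding the dimensions of the Kronecker/block constructions and the bit-sizes of the rationals involved, which are all polynomial in the input description of $M$, $v$, and the equations defining $W$. The main obstacle, and the place the argument really relies on working over an ordered field rather than an arbitrary field of characteristic $0$, is the sum-of-squares collapse; without it, the reduction of a conjunction of LRS-zero conditions to a single LRS-zero condition would not be straightforward.
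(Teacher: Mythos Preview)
Your argument is correct and in fact delivers a \emph{many-one} polynomial-time reduction, which is exactly what the theorem statement promises. The ingredients you use---the augmented matrix for the affine shift, Kronecker products for pointwise multiplication of LRSs, block-diagonal sums, and the sum-of-squares collapse over $\mathbb{Q}$---are all standard and your size bound $O(m(k+1)^2)$ is right.

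The paper's sketch takes a different route: it argues that \emph{decidability} of Skolem implies one can explicitly compute the full zero set of each $f_j$ (as a finite set plus finitely many arithmetic progressions, by Skolem--Mahler--Lech), and then intersects these zero sets via the Chinese Remainder Theorem. This is really a Turing reduction rather than a many-one reduction, and the ``polynomial time'' aspect is left implicit. Your approach is tighter in that sense: it produces a single Skolem instance directly, and the polynomial bound on its size is transparent. Amusingly, the sum-of-squares trick you rely on is exactly the device the paper later uses in Section~\ref{sec:conti} for continuization, so your method is very much in the spirit of the paper even though it is not the argument given here. The one thing the paper's route buys is that it does not depend on the ground field being ordered; your collapse step genuinely needs $\mathbb{Q} \subset \mathbb{R}$, as you note.
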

\begin{proof}[Proof-Sketch]
 If Skolem problem is decidable then we can also compute the zero set explicitly. 
 Affine subspace reachability problem is like finding intersection of solution sets of equations of type $u^T M^t v = 0$.
 Intersection of arithmetic progressions can be computed using chinese remainder theorem.
\end{proof}
Orbit problem is 0-dimensional affine subspace reachability problem.
\begin{definition}[Orbit problem]
 \label{def:orbit}
 Input: $M \in \mathbb{Q}^{k \times k}, u,v \in \mathbb{Q}^k$
 Output: ``Yes'' if there is a $t \in \mathbb{N}$ such that $M^t u = v$; ``No'' otherwise.
\end{definition}
This problem is known to be decidable in PTIME.
The techniques used are from algebraic number theory.
The link between Skolem problem and Orbit problem was also hinted in ~\cite{KannanPtime}.
\begin{theorem}[Complexity of Orbit problem]~\cite{KannanPtime, Kannan_decidable}
\label{thm:compOrbit}
 The Orbit problem is in P.
\end{theorem}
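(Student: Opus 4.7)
The plan is to reduce the matrix equation $M^t u = v$ to a system of scalar exponential equations $\lambda_i^t = \alpha_i$ over algebraic numbers, and then solve that system using only tools from algebraic number theory, following the Kannan--Lipton strategy.

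First, I would restrict to the cyclic subspace generated by $u$. Compute the least integer $d \leq k$ for which $u, Mu, \ldots, M^d u$ are linearly dependent; Gaussian elimination yields $d$ and the annihilator polynomial $m(x)$ of $u$ under $M$ in polynomial time. Test whether $v$ lies in $\mathrm{span}(u, Mu, \ldots, M^{d-1}u)$; if not, answer ``No''. Otherwise, solve the linear system to write $v = P(M)u$ for the unique polynomial $P$ of degree less than $d$, so that the original question becomes $x^t \equiv P(x) \pmod{m(x)}$.

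Next, factor $m(x) = \prod_i (x - \lambda_i)^{e_i}$ over $\overline{\mathbb{Q}}$, using the canonical representation of Definition \ref{def: representation} to identify the distinct roots $\lambda_i$. By the Chinese Remainder Theorem, the congruence $x^t \equiv P(x) \pmod{m(x)}$ is equivalent to the scalar equations $\lambda_i^t = P(\lambda_i)$ for each distinct $\lambda_i$, augmented with $e_i - 1$ derivative conditions at each repeated root. Set $\alpha_i := P(\lambda_i)$, an algebraic number computable in polynomial time.

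Finally, solve the system $\lambda_i^t = \alpha_i$. If some $\lambda_i$ has $|\lambda_i| \neq 1$, then taking absolute values forces $t = \log|\alpha_i|/\log|\lambda_i|$, uniquely determined as a real number; compute this to sufficient precision from the algebraic data, round to the nearest non-negative integer, and verify that candidate against every remaining equation by fast exponentiation in the number field $\mathbb{Q}(\lambda_1, \ldots, \lambda_d)$ via Theorem \ref{thm:lattice_reduction}. If instead every $\lambda_i$ lies on the unit circle, Galois-stability of the eigenvalue set (since $M$ is rational) combined with Kronecker's theorem forces each $\lambda_i$ to be a root of unity: otherwise some Galois conjugate $\sigma(\lambda_i)$ would have modulus different from $1$ while still being an eigenvalue of $M$, contradicting the hypothesis. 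In this periodic case, the orders of the $\lambda_i$ are polynomial in their degree and height, and the Chinese Remainder Theorem on those orders finishes the job.

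The main obstacle I expect is bounding the complexity of verifying the candidate $t$: although $t$ can be exponentially large in magnitude, it has only polynomially many bits, so fast modular exponentiation performed in the fixed number field (where addition, multiplication, and equality test are all polynomial-time by Theorem \ref{thm:lattice_reduction}) keeps the verification step polynomial. The Galois-stability argument for the unit-circle case is what lets the whole algorithm avoid any appeal to transcendental number theory such as Baker's bounds on linear forms in logarithms.
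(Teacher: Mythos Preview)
The paper does not give its own proof of this theorem; it is quoted from \cite{KannanPtime, Kannan_decidable} and used as a black box in Section~\ref{sec:cos}. Your plan does track the Kannan--Lipton strategy closely, but the unit-circle branch contains a genuine gap.

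You claim that if every eigenvalue $\lambda_i$ lies on the unit circle then Galois-stability together with Kronecker's theorem forces each $\lambda_i$ to be a root of unity. Kronecker's theorem, however, applies only to algebraic \emph{integers}: a nonzero algebraic integer all of whose conjugates have modulus at most $1$ must be a root of unity, but the statement says nothing about algebraic numbers with a nontrivial denominator. The rational matrix
\[
M=\begin{pmatrix}3/5 & -4/5\\ 4/5 & 3/5\end{pmatrix}
\]
has eigenvalues $(3\pm4i)/5$, both of modulus $1$ and Galois-conjugate to each other, yet neither is a root of unity. This is precisely the rotation matrix underlying the paper's $\cos t\theta$ problem with $\cos\theta=3/5$, so your argument breaks on exactly the instances the paper is built around. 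The actual Kannan--Lipton argument inserts a further case split here: if $\lambda$ is not an algebraic integer, choose a prime (ideal) $p$ at which $\lambda$ has negative valuation; then $t\cdot v_{p}(\lambda)=v_{p}(\alpha)$ pins down $t$ with polynomially many bits. Only when $\lambda$ is an algebraic integer with all conjugates on the unit circle does Kronecker legitimately force it to be a root of unity, after which your periodic argument applies.
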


\section{The $\cos t \theta$ Problem}
\label{sec:cos}

In this section we will provide a polynomial time reduction from the $\cos t \theta$ problem to the Orbit problem. 
We first prove that the sequence $a_t = \cos t \theta$ is an LRS.
\begin{theorem}[$\cos t \theta$ is an LRS]
 \label{thm:coslrs}
 The sequence $a_t = \cos t \theta$ satisfies a linear recurrence relation over $\mathbb{Q}$.
\end{theorem}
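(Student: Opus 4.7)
The plan is to pass through the exponential form. Writing $\alpha = e^{\iota\theta}$, we have
$$\cos(t\theta) = \frac{1}{2}\bigl(\alpha^t + \alpha^{-t}\bigr),$$
so the goal reduces to producing a single polynomial in $\mathbb{Q}[x]$ whose roots include both $\alpha$ and $\alpha^{-1}$; such a polynomial then determines an LRS over $\mathbb{Q}$ for which $\cos(t\theta)$ is an explicit solution.

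The first step is to show that $\alpha$ is algebraic. This is immediate from the relation $\alpha^2 - 2\cos\theta\cdot\alpha + 1 = 0$ together with the assumption (standing throughout the $\cos t\theta$ problem) that $\cos\theta$ is algebraic. Hence $\alpha$ is algebraic over $\mathbb{Q}(\cos\theta)$ and therefore over $\mathbb{Q}$; let $p(x) \in \mathbb{Q}[x]$ be its minimal polynomial, of degree at most $2\,D(\cos\theta)$.

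The heart of the argument is the observation that $\alpha^{-1}$ is automatically also a root of $p$. Because $|\alpha|=1$, we have $\alpha^{-1} = \bar\alpha$; applying complex conjugation to $p(\alpha)=0$ and using that $p$ has rational (hence real) coefficients gives $p(\alpha^{-1}) = p(\bar\alpha) = 0$. The remaining step is standard theory of linear recurrences: any $\mathbb{C}$-linear combination $\sum_\beta c_\beta\,\beta^t$ indexed by the roots $\beta$ of $p$ is a solution of the recurrence whose characteristic polynomial is $p$. The specific combination $\frac{1}{2}\alpha^t + \frac{1}{2}\alpha^{-t} = \cos(t\theta)$ is therefore such a solution, so $a_t$ satisfies this rational recurrence.

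I do not anticipate a substantive obstacle here. The one genuinely non-trivial ingredient is the conjugate-inverse correspondence $\alpha^{-1} = \bar\alpha$, which depends essentially on $\alpha$ lying on the unit circle and is the only place where the analytic nature of $\cos$ enters; the degenerate case $\alpha = \pm 1$ (i.e. $\alpha = \alpha^{-1}$) is handled uniformly by the same argument, with $p(x) = x\mp 1$ producing the trivially correct recurrences $a_t = \pm a_{t-1}$.
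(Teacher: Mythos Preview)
Your proof is correct and follows essentially the same idea as the paper: both take the minimal polynomial $p\in\mathbb{Q}[x]$ of $z=e^{\iota\theta}$ and read off the recurrence from it. The paper's execution is marginally more direct---instead of verifying that $z^{-1}=\bar z$ is also a root of $p$, it simply notes that $z^t$ satisfies the recurrence (whose coefficients are real) and then takes real parts, using $\mathrm{Re}(z^t)=\cos t\theta$---but the content is the same.
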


\begin{proof}
 Let $z = \cos \theta + \iota \sin \theta$ where $\sin \theta = \sqrt{1 - \cos^2 \theta}$.
 Consider the minimal polynomial of $z$, $c_0x^k - \Sigma_{i = 1}^{i = k}c_i x^{k-i}$. 
 That implies, $z^k = \Sigma_{i = 1}^{i = k}z^{k-i}$.
 Multiplying with $z^{t-k}$ both sides we get, $z^t = \Sigma_{i = 1}^{i = k}z^{t-i}$.
 Taking real parts of both sides and using De'Moivere's identity, $\cos t \theta = \Sigma_{i = 1}^{i = k} \cos (t-i) \theta$.
 We get a linear recurrence relation.
\end{proof}

The eigenvalues of this LRS are exactly the conjugates of $z$.
The equation $\cos t \theta =c$ is an affine subspace reachability problem of co-dimension 1.
This can also be thought of as Skolem problem of order 3 over algebraic numbers.

Now we will see a reduction from this problem to Orbit problem.
\subsection{$\cos t \theta$ is in PTIME}
\label{cos:main}

The reduction exploits the fact that multiplication with algebraic numbers is a $\mathbb{Q}$-linear transformation. 
\begin{theorem}[$\cos t \theta$ problem is in P]
 \label{thm:main}
 Given real algebraic numbers $\alpha = \cos \theta, c$ such that $|\alpha| \leq 1, |c| < 1$, there is a polynomial time algorithm to check the existence of a natural number $t$ such that $\cos t \theta = c$.
\end{theorem}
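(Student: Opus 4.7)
The plan is to reduce the $\cos t \theta$ problem to (at most two) instances of the Orbit problem by computing inside the number field $K = \mathbb{Q}(z)$ with $z = \cos\theta + \iota\sin\theta$. Since $\sin\theta = \sqrt{1-\cos^2\theta}$ is algebraic over $\mathbb{Q}$, so is $z$, and the relation $z^2 - 2\cos\theta\,z + 1 = 0$ both yields a minimal polynomial of degree at most $2\,D(\cos\theta)$ and records that $|z|=1$, so $\bar{z} = z^{-1}$. The map $w \mapsto zw$ is a $\mathbb{Q}$-linear endomorphism of $K$; using Theorem \ref{thm:lattice_reduction} I will compute, in polynomial time, a basis of $K$, the matrix $M \in \mathbb{Q}^{d\times d}$ representing multiplication by $z$ in that basis, and the coordinate vector $u \in \mathbb{Q}^d$ of $1 \in K$. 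Then $M^t u$ is precisely the coordinate vector of $z^t$.

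Next, observe that $\cos t\theta = c$ together with $|z^t| = 1$ forces $z^t \in \{w_+, w_-\}$, where $w_\pm = c \pm \iota\sqrt{1-c^2}$ are well-defined algebraic numbers whose canonical representations can be built from those of $c$ and $\cos\theta$. For each sign separately, I will use Theorem \ref{thm:lattice_reduction} to test whether $w_\pm$ is a $\mathbb{Q}$-linear combination of the chosen basis of $K$; if so, obtain a target coordinate vector $v_\pm \in \mathbb{Q}^d$, and otherwise discard that branch, since every $z^t$ lies in $K$. Each surviving branch becomes a genuine Orbit problem: does there exist $t \in \mathbb{N}$ with $M^t u = v_\pm$? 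I apply Theorem \ref{thm:compOrbit} to each and return ``yes'' iff at least one succeeds.

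The correctness direction is immediate from $\mathrm{Re}(z^t) = \cos t\theta$ and the case analysis above. The main obstacle, and the place that needs care, is the polynomial-time bookkeeping: one must verify that the canonical representation of $z$, the matrix $M$, the coordinate vectors $u, v_\pm$, and the membership tests for $w_\pm \in K$ can all be produced with bit-length bounded polynomially in the input size. This is where Theorem \ref{thm:lattice_reduction} and the basic algorithmic machinery for algebraic numbers from Section \ref{algebraic:prelim} do all the work; once the reduction has polynomial-size output, the Orbit-problem algorithm of Kannan--Lipton cited in Theorem \ref{thm:compOrbit} finishes the job in $P$.
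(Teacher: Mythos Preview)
Your proposal is correct and follows essentially the same route as the paper: reduce to at most two Orbit instances by working in $K=\mathbb{Q}(z)$ with $z=\cos\theta+\iota\sin\theta$, represent multiplication by $z$ as a rational matrix $M$, test via Theorem~\ref{thm:lattice_reduction} whether the two candidate targets $c\pm\iota\sqrt{1-c^2}$ lie in $K$, and invoke Theorem~\ref{thm:compOrbit}. If anything, you are slightly more careful than the paper about the polynomial-time bookkeeping (explicitly noting $z^2-2\cos\theta\,z+1=0$ to bound the degree and that $|z|=1$ forces $z^t\in\{w_+,w_-\}$).
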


We provide the following algorithm.
\begin{enumerate}
 \item Compute $z  = \alpha + \iota \sqrt{1 - \alpha^2} $
 \item Check if $c \pm \iota \sqrt{1-c^2} \in \mathbb{Q}(z)$. If both cases give negative answer return ``No'', otherwise compute the coordinate of the target vectors (those amongst $c \pm \iota \sqrt{1-c^2}$ which are in $\mathbb{Q}(z)$)  and go to next step.
 \item Compute the  multiplication matrix for $z$.
 \item Solve $M^t \mathbf{1} = v$ for all target vectors.
 \item Return $OR$ of outputs.
\end{enumerate}
Below we provide a proof of Theorem \ref{thm:main}. 

\begin{proof}
As $z = \cos \theta + \iota \sin \theta$ using De'Moivere's identity, $z^t = \cos t \theta + \iota \sin t \theta$.
If $\cos t \theta = c$ then $\sin t \theta$ is either $\sqrt{1-c^2}$ or $-\sqrt{1-c^2}$.
We can consider both the cases.
So we need to check $\exists ?t \in \mathbb{N} z^t = c + \iota \sqrt{1-c^2}$ or $\exists? t \in \mathbb{N} z^t = c - \iota \sqrt{1-c^2}$.
Step 2 checks if both of these are not in $\mathbb{Q}(z)$, since $z^t \in \mathbb{Q}$, so we only need to check for those targets which are in $\mathbb{Q}(z)$. 
This condition can be checked in polynomial time as mentioned in Theorem \ref{thm:lattice_reduction}.
Theorem \ref{thm:lattice_reduction} also gives coordinate for the case when it is in $\mathbb{Q}(z)$.
Multiplication with $z$ is a linear transformation over $\mathbb{Q}(z)$ which is a vector space over $\mathbb{Q}$.
We can compute this matrix in polynomial time as mentioned in Theorem \ref{thm:lattice_reduction}. 
Now the problem to check $\exists ? t \in \mathbb{N} z^t = c + \iota \sqrt{1-c^2}$ or $\exists t \in \mathbb{N} z^t = c - \iota \sqrt{1-c^2}$ is same as checking $\exists ? t \in \mathbb{N} M^t = v$ where $v$ is the vector representation for $c \pm \iota \sqrt{1-c^2}$.
This is an instance of Orbit problem.
The reduction is in polynomial time as all the required computation are done in polynomial time and number of steps is constant.
Using Theorem \ref{thm:compOrbit}, we get that $\cos t \theta$ problem is in P.

\end{proof}

\section{Extensions of $\cos t \theta$ Problem}
\label{sec:ext}
The $\cos t \theta $ problem is a natural problem from point of view of Diophantine equations with trigonometric functions.
This immediately suggests inquiry into extensions of the $\cos t \theta$ problem. 
We will look into two specific extensions.
The first one is due to exponential function while the second one is due to summation of LRSs.

We begin with the first extension.

\subsection{The $r^t \cos t \theta$ problem}
\label{rcos:ext}
Given an algebraic number $z$ and a real algebraic number $c$, checking the existence of $t$ such that $Re(z^t) = c$ is motivation for this extension.
This can also be written as $r^t \cos t \theta = c$ where $r = |z|$ and $\theta = arg(z)$.
We call this problem `` $r^t \cos t \theta$ problem''. 
Like $\cos t \theta$ problem $r^t \cos t \theta$ problem is also a case of Skolem problem of order 3 over algebraic numbers.
Hence this problem is also known to be decidable in $NP^{RP}$.
\begin{theorem}[Polynomial time restrictions of $r^t \cos t \theta$ problem]
For the following conditions the $r^t \cos t \theta$ problem is in P:\\
\begin{enumerate}
 \item $r \leq 1$
 \item $ z$ has a $\mathbb{Q}$-conjugate with absolute value less than or equal to 1
 \item $r = \frac{\alpha}{\beta} $  and $\cos \theta = \frac{\gamma}{\delta}$ where $\alpha, \beta, \gamma, \delta \in \mathcal{O}_{\mathbb{A}}$ such that ideal generated by $\alpha$ has a prime factor which does not divide ideal generated by $\delta$.
\end{enumerate}
\end{theorem}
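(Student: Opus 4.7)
My plan is to handle the three cases separately, each time reducing either to the $\cos t\theta$ problem of Theorem~\ref{thm:main} or to a bounded search whose bound on $t$ is polynomial in the input size; verification of each candidate is then done by exact arithmetic in $\mathbb{Q}(z)$ via Theorem~\ref{thm:lattice_reduction}.

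For case 1, if $r = 1$ the equation is literally $\cos t\theta = c$ and Theorem~\ref{thm:main} applies directly. If $r < 1$ and $c \neq 0$, the estimate $|r^t \cos t\theta| \leq r^t$ forces $t \leq \log(1/|c|)/\log(1/r)$; a standard root-separation lower bound on $|1 - r|$ for algebraic $r \neq 1$ (in the spirit of Theorem~\ref{thm:Mignotte_root}, applied to the minimal polynomial of $r$ shifted by $1$) makes $\log(1/r)$ at least $1/\mathrm{poly}(n)$ where $n$ is the input bit size, so the threshold is polynomial. The case $c = 0$ reduces to $\cos t\theta = 0$.

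For case 2, I apply a Galois automorphism $\sigma$ of the normal closure of $\mathbb{Q}(z)$ with $\sigma(z) = z'$, chosen (by composing with complex conjugation if necessary) so that also $\sigma(\bar z) = \bar{z'}$. Applying $\sigma$ to the identity $z^t + \bar z^t = 2c$ yields the conjugate equation $(r')^t \cos t\theta' = \sigma(c)$ with $r' = |z'| \leq 1$, which is an instance of case 1. Since every solution of the original equation is a solution of the conjugate, it suffices to enumerate the conjugate's solution set and verify each candidate against the original. The enumeration is immediate when $r' < 1$ by the bound from case 1; when $r' = 1$ the solution set is either a finite union of arithmetic progressions (if $\theta'/\pi$ is rational, by periodicity of $\cos t\theta'$) or at most a single element (if $\theta'/\pi$ is irrational, by injectivity of $t \mapsto \cos t\theta'$ on $\mathbb{N}$).

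For case 3, I use the Chebyshev identity $\cos t\theta = T_t(\cos\theta)$ together with the integer-coefficient homogeneous polynomial $Q_t(x,y) := y^t T_t(x/y)$ of degree $t$, which recasts the equation as $\alpha^t Q_t(\gamma,\delta) = c\,\beta^t\delta^t$. Taking the $\mathfrak{p}$-adic valuation at a prime ideal $\mathfrak{p}$ with $v_{\mathfrak{p}}(\alpha) \geq 1$ and $v_{\mathfrak{p}}(\delta) = 0$ (guaranteed by hypothesis), and assuming without loss of generality that $\alpha,\beta$ are coprime so $v_{\mathfrak{p}}(\beta) = 0$, the nonnegativity $v_{\mathfrak{p}}(Q_t(\gamma,\delta)) \geq 0$ (since $Q_t(\gamma,\delta)$ is an algebraic integer) forces $t \cdot v_{\mathfrak{p}}(\alpha) \leq v_{\mathfrak{p}}(c)$, a polynomial bound on $t$. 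The main obstacle will be case 2: I must verify that a Galois automorphism with both $\sigma(z) = z'$ and $\sigma(\bar z) = \bar{z'}$ can always be selected (and if not, iterate over all conjugate pairs), and that the $\cos t\theta$ decision algorithm of Theorem~\ref{thm:main} can be augmented to return a finite description of its solution set rather than a mere yes/no, so that the verification step against the original equation becomes executable.
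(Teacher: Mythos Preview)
Your approach matches the paper's proof-sketch in all three cases: the $r<1$ bound via $\lceil \log|c|/\log r\rceil$ in case~1, the Galois transfer $z^t+\bar z^t=2c \mapsto \gamma^t+\bar\gamma^t=d$ with $|\gamma|\le 1$ in case~2, and the $\mathfrak p$-adic valuation bound in case~3. Your write-up is in fact more detailed than the paper's sketch---you handle the $c=0$ subcase, argue polynomiality of the bound via root separation, and make case~3 explicit through the homogenised Chebyshev polynomial $Q_t$---and the obstacle you flag for case~2 (that an automorphism with $\sigma(z)=z'$ need not satisfy $\sigma(\bar z)=\bar{z'}$, and that the solution-set description rather than a yes/no is needed when $r'=1$) is real and is simply glossed over in the paper's sketch as well.
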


\begin{proof}[Proof-sketch]
 1. The $\cos t \theta$ was a special case when $r = 1$. 
The case when $r < 1$ is also decidable in polynomial time as after $\left\lceil\frac{\log |c|}{\log |r|} \right \rceil$ steps the value of $r^t \cos t \theta < c$ at every later step.
\par
2. Using the Galois transformations $ z \mapsto \gamma$ we can convert $z^t + \overline{z}^t = c$ to $\gamma ^t + \overline{\gamma}^t = d$ where $\gamma$ is a conjugate with $|\gamma|\leq 1$. 
Then it is same as previous case.
\par
3. Using the valuation with respect to a prime factor of such an ideal, we get that the valuation will be monotonically increasing for $z^t$ and that gives a bound as the valuation of $c$ is fixed.
\end{proof}
The gap between upper and lower bounds remain as these cases are not exhaustive.

\subsection{The $\Sigma \cos t \theta_i$ problem}
\label{sumcos:ext}
Another way to extend this problem is by extending the order. 
This problem is $\exists t \in \mathbb{N}$ such that $\Sigma_{i = 1}^{i = k} c_i \cos t \theta_i = 0$.
We call this ``$\Sigma \cos t \theta_i$ problem.
This problem is not known to be decidable.
The $\cos t \theta$ is an special case of this problem.
This problem is known to be NP-hard\cite{Akshay2017ComplexityOR}.
Even a restriction of this problem when $\theta_i$s are restricted to be rational multiples of $\pi$ is known to be NP-complete.
Only case when we know decidability with non-degenerate $\theta$ is the $\cos t \theta$ problem.
We conjecture the following.
\begin{conjecture}
 $\Sigma \cos t \theta_i$ problem is decidable only if Skolem problem is decidable.
\end{conjecture}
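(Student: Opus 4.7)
The plan is to establish the contrapositive: if the $\Sigma \cos t \theta_i$ problem is decidable, then so is the Skolem problem, i.e.\ to give a polynomial-time reduction from Skolem to $\Sigma \cos t \theta_i$. By Lemma~\ref{lem:exp_eqn} every Skolem instance is equivalent to deciding whether $\sum_{i=1}^m p_i(t)\, r_i^t \cos t\theta_i = 0$ has a solution $t \in \mathbb{N}$, so the task splits naturally into two sub-tasks: eliminate the polynomial factors $p_i(t)$, and then eliminate the real-exponential factors $r_i^t$.

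For the first sub-task I would use finite-difference operators. Since $r^t \cos t\theta$ is annihilated by $L_{r,\theta}[f](t) := f(t+1) - 2r\cos\theta \cdot f(t) + r^2 f(t-1)$, applying a suitable product $\prod_i L_{r_i,\theta_i}^{k_i}$ with $k_i$ exceeding the degree of the corresponding Jordan block kills every polynomial factor, at the cost of producing another linear recurrence whose terms are again linear combinations of the building blocks $r_i^t \cos(t\theta_i + \phi_i)$. Iterating this construction and tracking the resulting coefficients yields, after polynomially many steps, an equivalent equation of the form $\sum_i c_i\, r_i^t \cos(t\theta_i + \phi_i) = 0$ with algebraic $c_i$ and no polynomial factors.

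The second sub-task is the real content. One would partition the remaining terms by magnitude: let $\rho = \max_i r_i$, divide through by $\rho^t$, and write the equation as $S(t) + E(t) = 0$ where $S(t)$ collects the dominant terms (those with $r_i = \rho$, which become pure cosines) and $E(t)$ collects the exponentially decaying terms. The set $S(t) = 0$ is exactly a $\Sigma \cos t\theta_i$ instance, which the hypothetical oracle decides; so one would like to argue that for $t$ past some threshold a zero of $S+E$ forces a zero of $S$, and conversely that zeros of $S$ are either not zeros of $S+E$ (perturbed by $E$ in a detectable way) or can be enumerated via the arithmetic-progression structure of Skolem-Mahler-Lech and individually verified.

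The main obstacle is producing an effective separation bound: one needs a computable threshold $T$ such that $|S(t)| > |E(t)|$ whenever $t \geq T$ and $S(t) \neq 0$, together with a companion lower bound preventing $S+E$ from accidentally vanishing where $S$ does not. Such bounds are precisely the output of Baker's theorem on linear forms in logarithms, and avoiding transcendence theory is the same hurdle that leaves Skolem open beyond order $4$. A conditional reduction using Baker bounds therefore looks feasible and would already provide strong evidence for the conjecture; an unconditional reduction would require a genuinely new device that converts ``no pure cosine-sum zero'' into ``no weighted exponential cosine-sum zero'' without ever bounding the magnitude of the error term, and is where I expect any serious attempt to stall.
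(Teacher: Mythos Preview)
The paper does not prove this statement: it is explicitly labelled a \emph{conjecture} and is followed by no argument whatsoever. The only justification the paper offers is the informal remark in the conclusion that ``all the challenges in solving Skolem problem also remain for the $\Sigma \cos t\theta_i$ problem.'' So there is nothing to compare your attempt against; you are trying to prove something the author leaves open.

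As for the attempt itself, there is a genuine gap already in your first sub-task. Applying a linear difference operator $L$ to a sequence $u$ does not produce an \emph{equivalent} equation: the zero set of $L[u]$ has no controlled relation to the zero set of $u$. Worse, the product $\prod_i L_{r_i,\theta_i}^{k_i}$ with $k_i$ exceeding the Jordan-block degrees annihilates the entire sequence (each summand $p_i(t)r_i^t\cos t\theta_i$ is killed by its own factor $L_{r_i,\theta_i}^{k_i}$), so the output is identically zero and carries no information. If you stop short of annihilation you still have polynomial factors on the surviving modes. Finite differencing is the right tool for lowering the order of a single exponential-polynomial term, but it is not a zero-set-preserving transformation and cannot serve as a reduction.

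Your second sub-task diagnoses its own obstruction accurately: separating the dominant cosine sum $S(t)$ from the exponentially small remainder $E(t)$ in a way that is \emph{effective} is exactly the missing Baker-type bound, and you correctly note that this is where any serious attempt stalls. That is precisely why the paper states the implication as a conjecture rather than a theorem.
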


\section{Contiuization and Computation}
\label{sec:conti}
In this section we will see few steps towards converting the Skolem problem to its analytical version. 
We will use continuization to do so. 
The motivation behind this is the fact that the equations $r^t p(t) \cos t \theta = c$ can be solved easily for $t \in \mathbb{R}$ where $\theta \in [0, \pi]$.
We state the following theorem as a first step towards continuization of Skolem problem.
\begin{proposition}
 If $\exists ?t \in \mathbb{R},  \Sigma \cos t \theta_i = 0$  is decidable then $\exists ?t \in \mathbb{N},  \Sigma \cos t \theta_i = 0$ is also decidable.
\end{proposition}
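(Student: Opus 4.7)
The idea is to encode the condition $t \in \mathbb{Z}$ via the auxiliary cosine equation $1 - \cos 2\pi t = 0$ and then combine it with the original equation into a single sum-of-cosines identity by taking a sum of squares.

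\textbf{Construction.} Define
\[
h(t) := \Bigl(\sum_i c_i \cos t\theta_i\Bigr)^2 + (1 - \cos 2\pi t)^2.
\]
Both summands are non-negative, so a real $t$ satisfies $h(t)=0$ if and only if $\sum_i c_i \cos t\theta_i = 0$ and $t \in \mathbb{Z}$. Expanding with the product-to-sum identities $\cos\alpha \cos\beta = \tfrac12(\cos(\alpha-\beta) + \cos(\alpha+\beta))$ and $\cos^2\alpha = \tfrac12(1 + \cos 2\alpha)$, the function $h(t)$ rewrites as a constant plus a sum $\sum_j D_j \cos t\phi_j$, where each $\phi_j$ is a $\mathbb{Z}$-linear combination of the $\theta_i$'s and $2\pi$ (normalized into $[0,\pi]$ using evenness and $2\pi$-periodicity of cosine), and each $D_j$ is algebraic in the input. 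Absorbing the constant as $D_0 \cos(t \cdot 0)$, the equation $h(t) = 0$ takes the form $\sum_j D_j \cos t\phi_j = 0$, a bona fide instance of the continuous $\Sigma \cos t\phi_j = 0$ problem, computable from the input in polynomial time.

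\textbf{Reduction and correctness.} By hypothesis we can decide whether this new equation has a real solution. A positive answer gives an integer $t_0$ with $\sum_i c_i \cos t_0 \theta_i = 0$; by the evenness of cosine we may assume $t_0 \geq 0$, and the edge case $t_0 = 0$ is resolved directly by checking whether $\sum_i c_i = 0$. Conversely, any natural-number witness for the original problem is automatically a real zero of $h$, so no spurious answers are introduced in either direction.

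\textbf{Main obstacle.} The essential point is that the squared-sum encoding preserves the zero set exactly, rather than introducing extraneous real roots; non-negativity of each summand is precisely what guarantees this. A secondary subtlety is that the expanded equation carries a nonzero additive constant, but this fits the target format by treating it as a cosine at the degenerate angle $\phi = 0$. Finally, one must check that the coefficients and angles produced remain in the algebraic class allowed by the continuous problem, which follows because $\sin\theta_i = \sqrt{1 - \cos^2\theta_i}$ is algebraic whenever $\cos\theta_i$ is, so sums and differences such as $\cos(\theta_i \pm \theta_j)$ are algebraic as well.
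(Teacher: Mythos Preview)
Your argument is essentially the paper's own: encode integrality via $\cos 2\pi t = 1$, take the sum of squares, and expand products of cosines additively. You are in fact more careful than the paper about the $\mathbb{Z}$ versus $\mathbb{N}$ distinction and about the constant term.

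One genuine slip: the parenthetical claim that the resulting angles $\phi_j$ can be ``normalized into $[0,\pi]$ using evenness and $2\pi$-periodicity of cosine'' is false in this setting. Evenness lets you replace $\phi$ by $|\phi|$, but $2\pi$-periodicity acts on the argument $t\phi$, not on $\phi$: for real $t$ one has $\cos(t\cdot 2\pi)\neq\cos(t\cdot 0)$, so the angles $2\pi$ and $4\pi$ that arise from $(1-\cos 2\pi t)^2$ cannot be reduced into $[0,\pi]$. Simply drop the normalization remark; the target continuous problem (as the paper uses it) does not require $\phi_j\in[0,\pi]$, only that $\cos\phi_j$ be algebraic, which your angles satisfy.
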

\begin{proof}
 We use the summation of squares method with the fact that $\cos 2 \pi t = 1$  characterises integers. 
$$\exists t \in \mathbb{N}, \Sigma c_i \cos t \theta_i = c \iff \exists t \in \mathbb{R}, ((\Sigma c_i \cos t \theta_i) - c)^2 + (\cos 2 \pi t - 1)^2  = 0 $$.

If we expand the square we get some multiplicative terms for example $\cos t \theta_i \cos t \theta_j$, using the identity $\cos(A+B) + \cos (A-B) = 2 \cos A \cos B$, we can convert them to additive cosine terms and get that the equation in rhs is also as desired  i.e. of form $\Sigma c_i \cos t \theta_i$ and we get the reduction.
\end{proof}

This can be extended to the Skolem problem also we omit the proof as it is very similar to previous one.

\begin{proposition}
 If $\exists ?t \in \mathbb{R},  \Sigma p_i(t)r_i^t \cos t \theta_i = 0$  is decidable then $\exists ?t \in \mathbb{N},  \Sigma p_i(t)r_i^t \cos t \theta_i = 0$ is also decidable.
\end{proposition}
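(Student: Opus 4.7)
The plan is to imitate the sum-of-squares reduction used in the previous proposition, but with a richer integrand that carries the polynomial and exponential factors along. Concretely, I would establish
\begin{equation*}
\exists t \in \mathbb{N},\ \Sigma p_i(t) r_i^t \cos(t\theta_i) = 0 \iff \exists t \in \mathbb{R},\ \Bigl(\Sigma p_i(t) r_i^t \cos(t\theta_i)\Bigr)^{\!2} + (\cos(2\pi t)-1)^2 = 0,
\end{equation*}
using that a sum of two squares over $\mathbb{R}$ vanishes iff each summand vanishes, and $\cos(2\pi t) = 1$ precisely on $\mathbb{Z}$. The remaining work is to massage the right-hand side back into the canonical form $\Sigma q_j(t) s_j^t \cos(t\phi_j) = 0$, so that the oracle for the real-variable problem can be invoked as a black box.

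The key algebraic step is expanding the squares and folding products of cosines back into sums. For a cross term $p_i(t) p_j(t) (r_i r_j)^t \cos(t\theta_i)\cos(t\theta_j)$, the product-to-sum identity
\begin{equation*}
\cos(t\theta_i)\cos(t\theta_j) = \tfrac{1}{2}\bigl[\cos(t(\theta_i+\theta_j)) + \cos(t(\theta_i-\theta_j))\bigr]
\end{equation*}
produces two terms whose exponential base $r_i r_j$ is still a real algebraic number, whose polynomial coefficient $p_i p_j$ is still a polynomial, and whose cosine is at the new angle $\theta_i \pm \theta_j$. For the integrality witness I would use $\cos^2(2\pi t) = \tfrac{1}{2}(1 + \cos(4\pi t))$ to rewrite $(\cos(2\pi t)-1)^2$ as $\tfrac{3}{2} - 2\cos(2\pi t) + \tfrac{1}{2}\cos(4\pi t)$, again of the required form. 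The constant $c^2$ (here $c=0$, but the derivation is the same for general constants on the right-hand side) becomes $c^2 \cdot 1^t \cdot \cos(t \cdot 0)$, and the linear cross term $-2c\,\Sigma p_i(t) r_i^t \cos(t\theta_i)$ is already canonical.

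The main point to double-check, and what I expect to be the only nontrivial obstacle, is that the resulting expression lives in the same class of equations the oracle accepts. Algebraicity is preserved because $r_i r_j$ and $\cos(\theta_i \pm \theta_j) = \cos\theta_i\cos\theta_j \mp \sin\theta_i\sin\theta_j$ are algebraic whenever $r_i, \cos\theta_i$ are algebraic, using $\sin\theta_i = \sqrt{1 - \cos^2\theta_i}$. A mild subtlety is that the angle $\theta_i + \theta_j$ can land in $(\pi, 2\pi]$, and the integrality angles $2\pi$ and $4\pi$ also exceed the $[0,\pi]$ normalization used in Lemma \ref{lem:exp_eqn}; since the proposition itself places no such restriction on $\theta_j$ on either side of the implication, this is purely bookkeeping and does not block the reduction. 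Finally, the whole transformation is polynomial-time, as it only involves quadratically many product-to-sum applications and a fixed number of additional trigonometric identities.
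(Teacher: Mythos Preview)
Your proposal is correct and follows essentially the same approach the paper indicates: the paper omits the proof, stating only that it is ``very similar to the previous one,'' and your sum-of-squares reduction with the integrality witness $(\cos 2\pi t - 1)^2$ followed by product-to-sum expansion is precisely that argument spelled out in full. The additional bookkeeping you supply (algebraicity of the new bases $r_ir_j$ and angles $\theta_i\pm\theta_j$, and the $[0,\pi]$ normalization caveat) goes beyond what the paper records but introduces no new ideas.
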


Note that this different problem from continuous-time Skolem problem as the base is algebraic numbers for exponentials. 
However, this technique of continuization may be extended to membership problem for $P$-recursive sequences and other sequences.
We conjecture two statements one about $P$-recursive sequences and other about computation in general.
\begin{conjecture}
 The membership problem for $P$-recursive sequences is reducible to solving one variable equation over reals.
\end{conjecture}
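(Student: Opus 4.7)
The plan is to imitate the sum-of-squares construction from the previous proposition, now with the richer summands $p_i(t)\, r_i^t \cos t\theta_i$. Write $F(t) := \sum_i p_i(t)\, r_i^t \cos t\theta_i$ and use the characterisation $\cos 2\pi t = 1 \iff t \in \mathbb{Z}$ to start from
\[
\exists\, t \in \mathbb{N},\ F(t) = 0 \iff \exists\, t \in \mathbb{R},\ F(t)^2 + (\cos 2\pi t - 1)^2 = 0,
\]
up to the sign subtlety discussed below. It then suffices to rewrite the right-hand side in the allowed shape $\sum_k q_k(t)\, s_k^t \cos t\phi_k$, with $q_k$ a polynomial, $s_k$ real algebraic, and $\phi_k$ real, so that the hypothesised real oracle applies.

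For the first square, expand
\[
F(t)^2 = \sum_{i,j} p_i(t) p_j(t)\, (r_i r_j)^t \cos t\theta_i \cos t\theta_j
\]
and apply $\cos A \cos B = \frac{1}{2}(\cos(A+B) + \cos(A-B))$ to each factor. Every resulting summand takes the form $\frac{1}{2}\, p_i(t) p_j(t)\, (r_i r_j)^t\, \cos t(\theta_i \pm \theta_j)$: the coefficient $p_i p_j$ remains an integer polynomial, the base $r_i r_j$ remains real algebraic, and $\theta_i \pm \theta_j$ remains real (it can be folded into $[0, \pi]$ via $\cos(-x) = \cos x$ if one wants to preserve the normalisation of Lemma \ref{lem:exp_eqn}). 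For the second square, use $\cos^2 x = \frac{1}{2}(1 + \cos 2x)$ to obtain
\[
(\cos 2\pi t - 1)^2 = \frac{3}{2} - 2\cos 2\pi t + \frac{1}{2} \cos 4\pi t,
\]
already of the required shape with constant coefficients and exponential base $1$. Adding all the pieces yields a single equation in exactly the assumed decidable form; the transformation is manifestly polynomial time, since the output is of quadratic size in the input.

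The main obstacle I foresee is the one the author sidesteps in the previous proposition: solvability of $F(t)^2 + (\cos 2\pi t - 1)^2 = 0$ over $\mathbb{R}$ certifies only the existence of an \emph{integer} zero of $F$, which could a priori be negative. For the earlier $\sum c_i \cos t\theta_i$ instance this is harmless because $\cos$ is even and integer zeros come in sign-symmetric pairs, but here the factors $r_i^t$ break that symmetry. I do not see a way to express the constraint $t \ge 0$ while staying inside the class $\sum q(t) s^t \cos t\phi$, so the $\mathbb{N}$-versus-$\mathbb{Z}$ gap is the genuine technical point; a clean patch seems to need running the real oracle on the auxiliary reflected equation $F(-t) = 0$ (which is of the same form with $r_i$ replaced by $r_i^{-1}$ and $p_i(t)$ by $p_i(-t)$) combined with a direct check on a finite initial segment $t \in \{0, 1, \ldots, N\}$ and a Skolem-Mahler-Lech style analysis to control the negative integer zero set.
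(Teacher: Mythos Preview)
The statement you are attempting to prove is a \emph{conjecture} in the paper, not a theorem: the paper offers no proof of it and explicitly labels it as open. What you have written is instead a proof sketch for the \emph{preceding Proposition} (the one about $\Sigma\, p_i(t) r_i^t \cos t\theta_i$, whose proof the paper omits as ``very similar'' to the first proposition). Your argument for that proposition is essentially the paper's intended one---expand the square, linearise the cosine products, observe the result stays in the same syntactic class---and your remark about the $\mathbb{N}$-versus-$\mathbb{Z}$ gap is a fair criticism that the paper itself glosses over.

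However, none of this touches the actual conjecture. $P$-recursive (holonomic) sequences satisfy linear recurrences with \emph{polynomial} coefficients, $p_k(n)a_{n+k}=p_{k-1}(n)a_{n+k-1}+\cdots+p_0(n)a_n$, and strictly generalise constant-coefficient LRS. They do not in general admit a closed form of the shape $\sum_i p_i(t) r_i^t \cos t\theta_i$; their natural interpolants are solutions of linear ODEs with polynomial coefficients (generalised hypergeometric functions and the like), for which the sum-of-squares/product-to-sum trick you use has no obvious analogue. The genuine content of the conjecture is precisely to identify a class of real-analytic functions that (i) interpolates arbitrary $P$-recursive sequences and (ii) is closed under the squaring-and-adding manoeuvre, and your proposal does not engage with either point. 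So as a proof of the stated conjecture there is a basic gap: you have addressed the wrong statement.
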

\begin{conjecture}
\label{conj:computation}
 The one variable extension of real number with first order axiomatizable functions is decidable.
\end{conjecture}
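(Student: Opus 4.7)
The plan is to attempt an effective quantifier elimination procedure for one-variable sentences in the expanded language, generalising Tarski's decision procedure for the real closed field. The first step is to fix a precise formulation: each extra function symbol $f_i$ should come with a recursively enumerable set of axioms $T_i$ in the language extending RCF, and the combined theory $T = \mathrm{RCF} \cup \bigcup_i T_i$ should be consistent with $\mathbb{R}$ as a model. The goal is then to decide, for any sentence $\varphi$ whose quantifiers range over a single real variable $t$, whether $\mathbb{R} \models \varphi$; recursive axiomatizability already gives semi-decidability of both $T \vdash \varphi$ and $T \vdash \neg\varphi$, so the task is to close the gap into a total algorithm.

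Next, I would exploit the one-variable restriction to reduce $\varphi$ to a normal form $\exists t \, \Phi(t)$ where $\Phi(t)$ is a Boolean combination of atomic formulas $g(t) \star 0$ with $g$ built from the $f_i$'s and polynomial operations, and $\star \in \{=, >\}$. In one variable, the truth of such a sentence is determined by a cell decomposition of $\mathbb{R}$ into intervals on which each constituent $g$ has constant sign. If one can show that the extended structure is o-minimal and that the endpoints of the cells are effectively isolable to arbitrary precision from the axioms, then $\varphi$ is decidable: locate the sign pattern of $\Phi$ on each cell, and check whether any cell realises it. The intermediate technical step here is to upgrade each axiomatization $T_i$ into effective bounds on the number of zeros of composite expressions on intervals of prescribed length, in the spirit of Khovanskii's fewnomial theory and Wilkie's work on $\mathbb{R}_{\exp}$.

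The hard part will be closing the gap between bare first-order axiomatizability and the much stronger tameness needed for effective decidability. Mere recursive enumerability of $T$ provides no control on the geometric behaviour of definable sets; one can envisage a first-order axiomatizable $f$ for which the zeros of $f(t)-1$ encode an undecidable set, since the earlier proposition of this section already shows that integer existence problems embed into real existence problems via $\cos 2\pi t = 1$. So a realistic refinement of the conjecture would restrict to functions whose axiomatization entails, as a theorem, a uniform bound on the number of sign changes of one-variable terms, together with effective separation of roots of its Galois-conjugate-like consequences. Under that refinement the above strategy closes into an algorithm; proving the conjecture exactly as stated almost certainly requires first isolating such a tameness principle and showing it is implied by the axiomatizability hypothesis, which is where I expect the true difficulty to lie.
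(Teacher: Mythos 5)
The statement you are trying to prove is explicitly a \emph{conjecture} in the paper: there is no proof to compare against, and the authors even entertain its falsity (``if this conjecture is false then we get an extension of reals which is undecidable''). The paper notes that the conjecture implies decidability of the Skolem problem and of membership for $P$-recursive sequences, so any genuine proof would have to settle those long-open problems as a corollary. Your proposal does not close this gap, and to your credit you say so in your final paragraph; but the concession means the attempt is a programme, not a proof.

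The concrete failure points are these. First, the cell-decomposition strategy requires o-minimality (or some effective tameness), and first-order axiomatizability gives nothing of the sort: $\cos$ is first-order axiomatizable (e.g.\ via its addition formulas or its defining ODE), yet $(\mathbb{R},+,\cdot,\cos)$ is not o-minimal, since the zero set of $\cos$ is infinite and discrete. So the very functions this paper is concerned with already defeat your step two. Second, recursive axiomatizability of $T$ gives semi-decidability of $T \vdash \varphi$ and of $T \vdash \neg\varphi$, but these two searches only terminate if $T$ decides $\varphi$; nothing in the hypotheses forces $T$ to be complete for one-variable sentences about the standard model $\mathbb{R}$, so ``closing the gap into a total algorithm'' has no mechanism behind it. Even in the best-behaved case, $\mathbb{R}_{\exp}$, which is o-minimal and model complete, decidability is known only conditionally on Schanuel's conjecture (Macintyre--Wilkie), so the tame route is not unconditionally available either. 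Third, your proposed ``realistic refinement'' (restricting to functions whose axioms entail uniform sign-change bounds) proves a different, weaker statement; it does not establish the conjecture as written, which deliberately includes untame functions like $\cos$ precisely because the one-variable fragment there is the interesting open case.
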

Conjecture \ref{conj:computation} implies decidability of Skolem problem and  membership problem for $P$-recursive sequences.
If this conjecture is false then we get an extension of reals which is undecidable and that will also have great implications on the theory of computation.

\section{Conclusion}
\label{sec:con}
In this paper we presented small steps towards some challenges in finding an algorithm for Skolem problem.
The first step is to overcome the use of transcendental number theory as it does not scale well.
This goal is partially achieved as the $r^t \cos t \theta$ problem still needs to use that for some of the cases.
The absence of lower bounds makes it interesting to explore the lower bounds for $r^t \cos t \theta$ problem.
\par
Our second contribution is in the direction of continuization of computation.
The key idea is to interpolate the sequences with some well-behaving functions over reals and then thinking of the problem as a problem for these sequences.
This can be useful particularly because there is abundance of real analytic tools to find roots of functions. 
\par
We made three conjectures in this paper. 
The first conjecture is interesting as it asserts that Skolem problem is hard only for the cases when the effective eigenvalues are on unit circle but not roots of unity.
This conjecture seems plausible as all the challenges in solving Skolem problem also remain for the $\Sigma \cos t \theta_i$ problem.
\par 
The other two conjectures are about the power of continuization in general. 
The theory of closed real field with cosine function is known to be undecidable but restriction to one variable case is an interesting unexplored problem.
Our last conjecture is about weakness of one variable fragment of extensions of theory of reals.

\bibliography{main}

\begin{thebibliography}{1}

\bibitem{Akshay2017ComplexityOR}
S.~Akshay, Nikhil Balaji, and Nikhil Vyas.
\newblock Complexity of restricted variants of skolem and related problems.
\newblock In {\em MFCS}, 2017.

\bibitem{Orbit_higher}
Ventsislav Chonev, Jo\"{e}l Ouaknine, and James Worrell.
\newblock On the complexity of the orbit problem.
\newblock {\em J. ACM}, 63(3):23:1--23:18, June 2016.
\newblock URL: \url{http://doi.acm.org/10.1145/2857050}, \href
  {https://doi.org/10.1145/2857050} {\path{doi:10.1145/2857050}}.

\bibitem{cohenbook}
Henri Cohen.
\newblock {\em A Course in Computational Algebraic Number Theory}.
\newblock Springer Publishing Company, Incorporated, 2010.

\bibitem{KannanPtime}
R.~Kannan and R.~J. Lipton.
\newblock Polynomial-time algorithm for the orbit problem.
\newblock {\em J. ACM}, 33(4):808--821, August 1986.
\newblock URL: \url{http://doi.acm.org/10.1145/6490.6496}, \href
  {https://doi.org/10.1145/6490.6496} {\path{doi:10.1145/6490.6496}}.

\bibitem{Kannan_decidable}
Ravindran Kannan and Richard~J. Lipton.
\newblock The orbit problem is decidable.
\newblock In {\em Proceedings of the Twelfth Annual ACM Symposium on Theory of
  Computing}, STOC '80, pages 252--261, New York, NY, USA, 1980. ACM.
\newblock URL: \url{http://doi.acm.org/10.1145/800141.804673}, \href
  {https://doi.org/10.1145/800141.804673} {\path{doi:10.1145/800141.804673}}.

\bibitem{lech1953}
Christer Lech.
\newblock A note on recurring series.
\newblock {\em Ark. Mat.}, 2(5):417--421, 08 1953.
\newblock \href {https://doi.org/10.1007/BF02590997}
  {\path{doi:10.1007/BF02590997}}.

\bibitem{Mignotte1983}
M.~Mignotte.
\newblock {\em Some Useful Bounds}, pages 259--263.
\newblock Springer Vienna, Vienna, 1983.
\newblock \href {https://doi.org/10.1007/978-3-7091-7551-4_16}
  {\path{doi:10.1007/978-3-7091-7551-4_16}}.

\bibitem{Shorey}
T.N. Shorey, R.~Tijdeman, and M.~Mignotte.
\newblock The distance between terms of an algebraic recurrence sequence.
\newblock 1984(349):63--76, 1984.
\newblock URL: \url{https://doi.org/10.1515/crll.1984.349.63}, \href
  {https://doi.org/doi:10.1515/crll.1984.349.63}
  {\path{doi:doi:10.1515/crll.1984.349.63}}.

\end{thebibliography}

\end{document}